\newtheorem{thm}{Theorem}[section]
\newtheorem{prop}[thm]{Proposition}
\newtheorem{cor}[thm]{Corollary}
\newtheorem{rem}[thm]{Remark}
\newcommand{\spec}{\operatorname{Spec}}
\newcommand{\et}{\mathrm{\acute{e}t}}
\newcommand{\Zar}{\mathrm{Zar}}
\newcommand{\HO}{\operatorname{H}}
\begin{document}

\author{Makoto Sakagaito}
\title{
A note on Gersten's conjecture 
for \'{e}tale cohomology
over two-dimensional
henselian regular local rings
}
\date{}
\maketitle
\begin{center}
Indian Institute of Science Education and Research, Mohali\footnote{
\textit{Present affiliation}:
Indian Institute of Science Education and Research, Bhopal.
\\
\textit{E-mail address}: makoto@iiserb.ac.in, sakagaito43@gmail.com
}
\end{center}

\begin{abstract}
We prove Gersten's conjecture
for \'{e}tale cohomology
over two dimensional 
henselian regular local rings
without
assuming equi-characteristic.
As an application, we obtain
the local-global principle 
for Galois cohomology
over mixed characteristic
two-dimensional henselian local rings.
\end{abstract}




\section{Introduction}
Let $R$ be an equi-characteristic regular local ring,
$k(R)$ the field of fractions of $R$,
$l$ a positive integer which is invertible in $R$ 
and $\mu_{l}$ the \'{e}tale sheaf of $l$-th roots of unity.
Then the sequence of \'{e}tale cohomology groups
\begin{align}\label{KC-I}
0
\to
\HO^{n+1}_{\et}
\left(
R,
\mu_{l}^{\otimes n}
\right)  
\to &
\HO^{n+1}_{\et}
\left(
k(R),
\mu_{l}^{\otimes n}
\right)   
\\
\to &
\displaystyle
\bigoplus_{
\substack
{
\mathfrak{p}\in \spec R  \nonumber \\
\operatorname{ht}(\mathfrak{p})
=
1
}
}
\HO^{n}_{\et}
\left(
\kappa(\mathfrak{p}),
\mu_{l}^{\otimes (n-1)}
\right) \nonumber \\
\to &
\displaystyle
\bigoplus_{
\substack
{
\mathfrak{p}\in \spec R  \nonumber \\
\operatorname{ht}(\mathfrak{p})
=
2
}
}
\HO^{n-1}_{\et}
\left(
\kappa(\mathfrak{p}),
\mu_{l}^{\otimes (n-2)}
\right)
\to \cdots \nonumber
\end{align}
is exact by  
Bloch-Ogus (\cite{B-O}) and Panin (\cite{P}). 
Here $\kappa(\mathfrak{p})$ is the residue 
field of $\mathfrak{p}\in \spec R$.

By using the exactness of the complex (\ref{KC-I}) 
at the first two terms, 
Harbater-Hartmann-Krashen (\cite{H-H-K}) and 
Hu (\cite{H}) proved the local-global
principle as follows.

\vspace{2.0mm}

Let $K$ be a field of one of the following types:
\begin{itemize}
\item[(a)] (semi-global case) The function field of a connected regular
projective curve over the field of fractions of
a henselian excellent discrete valuation ring
$A$.
\item[(b)] (local case) The function field of a two-dimensional
henselian excellent normal local domain $A$.
\end{itemize}
Then the following question was raised
by Colliot-Th\'{e}l\`{e}ne (\cite{C}): 

\vspace{2.0mm}

Let $n\geq 1$ be an integer and 
$l$ a positive integer which is invertible in $R$.
Is the natural map
\begin{equation}\label{CC}
 \operatorname{H}^{n+1}_{\et} 
 \left(
 K, \mu_{l}^{\otimes n}
 \right)
 \to
 \prod
 _{v\in \Omega_{K}}
 \operatorname{H}^{n+1}_{\et} 
 \left(
K_{v}, \mu_{l}^{\otimes n}
\right)
\end{equation}
injective ?

Here $\Omega_{K}$ is the set of normalized
discrete valuations on $K$
and 
$K_{v}$ is the corresponding henselization of $K$ for each 
$v\in\Omega_{K}$.

\vspace{2.0mm}

Suppose that
$A$ is equi-characteristic.
Harbater-Hartmann-Krashen (\cite[Theorem 3.3.6]{H-H-K}) proved that
the local-global map (\ref{CC}) is injective
in the semi-global case. 
Later, Hu (\cite[Theorem 2.5]{H}) proved that 
the local-global map (\ref{CC}) is injective
in both the semi-global case and the local case 
by an alternative method.

If the sequence (\ref{KC-I}) is exact (at the first two terms)
in the case where $R$ is
a mixed characteristic two-dimensional excellent
henselian local ring, then
the local-global map (\ref{CC}) is injective 
even without assuming equi-characteristic
(cf. \cite[Remark 3.3.7]{H-H-K} and
\cite[Remark 2.6 (2)]{H}).

In the case where $R$ is a local ring of 
a smooth algebra 
over a
(mixed characteristic) discrete valuation ring,
the sequence (\ref{KC-I}) is exact
(cf.\cite[Theorem 1.2 and Theorem 3.2 b)]{Ge}).

\vspace{2.0mm}

In this paper, we show the
following result:
\begin{thm}
(Theorem \ref{HenGerl})\label{MT}

Let $R$ be a mixed characteristic
two-dimensional excellent henselian local ring
and $l$ a positive integer which is invertible in $R$.
Then Gersten's conjecture 
for 
\'{e}tale cohomology
with
$\mu_{l}^{\otimes n}$ coefficients
holds over $\spec R$. 
That is, the sequence (\ref{KC-I}) is exact.
\end{thm}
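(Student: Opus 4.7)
The strategy is to reduce the theorem to Geisser's exactness result for smooth algebras over a (mixed characteristic) discrete valuation ring (cited in the introduction) via N\'eron--Popescu desingularization applied to $\widehat{R}$, and then to descend the exactness from $\widehat{R}$ back to $R$ using Gabber's affine analogue of proper base change for henselian pairs.

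For the completion: by the Cohen structure theorem (together with the excellence of $R$, which ensures $\widehat{R}$ is regular), the ring $\widehat{R}$ is a two-dimensional regular local ring of mixed characteristic containing a coefficient subring $V$ which is a complete discrete valuation ring with the same residue field as $R$; in the unramified case $\widehat{R}\cong V[[t]]$, and the ramified case reduces to this after passing to a finite extension of $V$. The inclusion $V\to\widehat{R}$ is a regular morphism (flat with geometrically regular fibers), so N\'eron--Popescu desingularization presents $\widehat{R}$ as a filtered colimit of smooth $V$-algebras $\{A_{\alpha}\}$. Localizing each $A_{\alpha}$ at the contraction of $\mathfrak{m}_{\widehat{R}}$ and invoking Geisser's theorem yields the exactness of (\ref{KC-I}) for each member of the filtered system; since \'etale cohomology with $\mu_{l}^{\otimes n}$ coefficients commutes with filtered colimits of rings, passing to the colimit gives the exactness of (\ref{KC-I}) for $\widehat{R}$.

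To descend from $\widehat{R}$ to $R$, I would invoke Gabber's theorem on henselian pairs, which provides isomorphisms $\HO^{*}_{\et}(R,\mu_{l}^{\otimes n})\cong\HO^{*}_{\et}(\widehat{R},\mu_{l}^{\otimes n})$, together with analogous comparisons at the residue fields of the height-$1$ primes (which reduce to the one-dimensional excellent henselian case). The main obstacle is precisely this descent step: because $\spec\widehat{R}\to\spec R$ is not a bijection on primes of positive height, the Gersten complex for $R$ is not literally a subcomplex or retract of that for $\widehat{R}$, and one instead has to bundle together the contributions from all primes of $\widehat{R}$ lying above each prime of $R$ and verify that the resulting comparison respects the Gersten differentials. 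Making this comparison explicit—using the regularity of $R\to\widehat{R}$ and invoking Gabber's isomorphism at every height—is where the bulk of the technical work will lie.
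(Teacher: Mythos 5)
Your route (complete, desingularize \`a la N\'eron--Popescu over a Cohen coefficient ring, quote Geisser, descend) is genuinely different from the paper's, but it has two gaps that I do not think are merely technical. First, the reduction of $\widehat{R}$ to a filtered colimit of smooth $V$-algebras breaks down in the ramified case. Cohen's theorem gives a coefficient ring $V\subset\widehat{R}$, but $\widehat{R}\cong V[[t]]$ only when $p\notin\mathfrak{m}^{2}$; when $p\in\mathfrak{m}^{2}$ one only gets $\widehat{R}\cong V[[t_{1},t_{2}]]/(f)$ with $f$ Eisenstein, the special fibre $\widehat{R}/p\widehat{R}$ is then singular (e.g.\ $\mathbf{Z}_{p}[[t_{1},t_{2}]]/(p-t_{1}t_{2})$ is regular of dimension $2$ but its reduction mod $p$ is $\mathbb{F}_{p}[[t_{1},t_{2}]]/(t_{1}t_{2})$), so $V\to\widehat{R}$ is not a regular morphism and Popescu does not apply; no finite extension of $V$ repairs this, since the obstruction is the ramification of $\widehat{R}$ itself, not of $V$. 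Second, and more seriously, the descent step that you yourself flag as ``where the bulk of the technical work will lie'' is the actual content of the theorem in your scheme, and it is not routine: $k(\widehat{R})$ is an enormous (transcendental) extension of $k(R)$, Galois cohomology is not injective under arbitrary field extensions, and a height-one prime of $R$ has as fibre in $\spec\widehat{R}$ the spectrum of a product of large extensions of its residue field, so exactness of the Gersten complex of $\widehat{R}$ does not formally imply exactness for $R$. Gabber's affine proper base change identifies $\HO^{*}_{\et}(R)$ and $\HO^{*}_{\et}(\widehat{R})$ (both equal $\HO^{*}_{\et}$ of the common residue field), but that only handles the first term of the complex. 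As written, the proposal proves neither the ramified case nor the descent, so it does not yet establish the theorem.

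For contrast, the paper avoids both completion and Geisser's theorem entirely. It proves the injectivity of $\HO^{i}_{\et}(R,\mu_{l}^{\otimes n})\to\HO^{i}_{\et}(k(R),\mu_{l}^{\otimes n})$ for \emph{any} henselian regular local ring by induction on dimension, using Artin's isomorphism $\HO^{i}_{\et}(R)\cong\HO^{i}_{\et}(R/\mathfrak{p})$ for a suitable height-one $\mathfrak{p}=(a)$ together with the injectivity statement for discrete valuation rings (Theorem \ref{DVR}); it proves exactness at the second term for an arbitrary two-dimensional regular local ring by a snake-lemma argument resting on Gabber's absolute purity; and it deduces exactness at the remaining terms from the coniveau spectral sequence. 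This is both more elementary and more general (Theorem \ref{HenGerl} needs neither excellence nor mixed characteristic), whereas your approach, even if the two gaps were filled, would be confined to the excellent case and would import the full strength of Geisser's Gersten theorem over Dedekind bases. If you want to salvage your strategy, the honest comparison of coniveau spectral sequences along the regular morphism $R\to\widehat{R}$ is the place to start, but you should expect it to require essentially the same injectivity and purity inputs that the paper uses directly.
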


See Remark \ref{Rem1} (iii) for the reason why we assume 
$\operatorname{dim}(R)=2$
in Theorem \ref{MT}.
We obtain the following result
as an application of Theorem \ref{MT}
:

\begin{thm}
\label{LG1}

With notations as above,
assume that $A$ is 
mixed characteristic and
$l$ is a positive integer 
which is invertible in $A$.

In both the semi-global case 
and the local case,
the local-global principle
for the Galois cohomology group
$\operatorname{H}^{n+1}(K, \mu_{l}^{\otimes n})$
holds for $n\geq 1$. 
That is,
the local-global map (\ref{CC})
is injective
for $n\geq 1$.
\end{thm}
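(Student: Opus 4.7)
The strategy is to reduce Theorem \ref{LG1} to the equi-characteristic proofs of \cite{H-H-K} and \cite{H} by using Theorem \ref{MT} to supply the one missing ingredient in the mixed characteristic setting. Indeed, \cite[Remark 3.3.7]{H-H-K} and \cite[Remark 2.6 (2)]{H} explicitly observe that the equi-characteristic argument for the injectivity of (\ref{CC}) goes through verbatim in mixed characteristic as soon as the Gersten sequence (\ref{KC-I}) is exact at its first two terms for two-dimensional mixed characteristic excellent henselian regular local rings. Theorem \ref{MT} furnishes precisely this exactness.

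More concretely, given a class $\alpha\in \HO^{n+1}_{\et}(K,\mu_{l}^{\otimes n})$ that dies in every $\HO^{n+1}_{\et}(K_{v},\mu_{l}^{\otimes n})$ with $v\in\Omega_{K}$, the first step is to select a regular two-dimensional model $X$ of $K$: in the local case, a resolution of singularities $X\to \spec A$, which exists by Lipman's theorem for excellent two-dimensional schemes; in the semi-global case, a regular proper model of the curve over $\spec A$. For each closed point $x$ of $X$ (and in particular each closed point of the special fiber), the henselization $\mathcal{O}_{X,x}^{h}$ is a two-dimensional mixed characteristic excellent henselian regular local ring, so Theorem \ref{MT} applies.

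The second step is to translate the vanishing hypothesis on $\alpha$ into the vanishing of its residues along every height-one point of $X$. This uses the standard compatibility between the residue maps appearing in (\ref{KC-I}) and henselization along discrete valuations, together with the fact that the valuations centered on the codimension-one points of $X$ are included in $\Omega_{K}$. Theorem \ref{MT}, applied to $\mathcal{O}_{X,x}^{h}$ for each closed point $x$, then lifts $\alpha$ to a class in $\HO^{n+1}_{\et}(\mathcal{O}_{X,x}^{h},\mu_{l}^{\otimes n})$; a patching argument on $X$ (or a direct appeal to the global Bloch-Ogus complex combined with properness of $X\to\spec A$ and proper base change for henselian pairs) assembles these local lifts into a class on $X$, from which one deduces $\alpha = 0$.

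The main obstacle is not conceptual but rather a bookkeeping one: I would need to verify that every step of the equi-characteristic proof in \cite{H} (or \cite{H-H-K} in the semi-global case) relies only on inputs that remain available in mixed characteristic once Theorem \ref{MT} is in hand, namely Lipman's resolution, the compatibility of residues with henselization, and properness of the chosen model. Since the authors of \cite{H-H-K} and \cite{H} have already flagged Gersten exactness as the unique missing hypothesis, this verification ought to reduce to matching hypotheses line by line.
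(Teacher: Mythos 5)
Your proposal is correct and follows essentially the same route as the paper: the paper's proof of Theorem \ref{LG1} consists precisely of invoking \cite[Remark 3.3.7]{H-H-K} and \cite[Remark 2.6 (2)]{H}, which reduce the injectivity of the local-global map (\ref{CC}) in mixed characteristic to the exactness of (\ref{KC-I}) at the first two terms over two-dimensional excellent henselian regular local rings, and that exactness is supplied by Theorem \ref{MT}. Your additional sketch of the model-theoretic argument (regular model, residues, patching) is consistent with what those cited remarks carry out, so no gap remains beyond the line-by-line verification you already flag.
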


V.Suresh also proved Theorem \ref{LG1} 
by an alternative method
(cf.\cite[Remark in Theorem 1.2]{H}).
\subsection{Notations}
For a scheme $X$,
$X^{(i)}$ is the set of points of
codimension $i$, 
$k(X)$ 
is 
the ring of rational functions
on $X$
and 
$\kappa(\mathfrak{p})$
is the residue field of 
$\mathfrak{p}\in X$.
If $X=\spec R$,
$k(\spec R)$ is abbreviated as $k(R)$. 
The symbol $\mu_{l}$ denotes 
the \'{e}tale sheaf of $l$-th roots of unity.
\section{Proof of the main result (Theorem \ref{MT})}
%
In this section, we use  the following results (Theorem \ref{DVR} 
and Theorem \ref{Pur}) repeatedly:
\begin{thm}\label{DVR}(cf.\cite[Theorem B.2.1 and Examples B.1.1.(2)]{CHK})
Let $A$ be a discrete valuation ring,
$K$ the function field of $A$ and
$l$ a positive integer which is invertible in $A$. Then the homomorphism
\begin{equation*}
\HO^{i}_{\et}(A, \mu_{l}^{\otimes n})
\to    
\HO^{i}_{\et}(K, \mu_{l}^{\otimes n})
\end{equation*}
is injective for any $i\geq 0$.
\end{thm}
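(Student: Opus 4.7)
The plan is to deduce the injectivity from the localization long exact sequence for \'{e}tale cohomology combined with Gabber's absolute cohomological purity theorem. Let $s = \spec \kappa$ denote the closed point of $\spec A$, $\eta = \spec K$ the generic point, and consider the localization sequence for cohomology with support in $s$:

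\begin{equation*}
\cdots \to \HO^{i-1}_{\et}(K, \mu_{l}^{\otimes n}) \xrightarrow{\partial} \HO^{i}_{s}(\spec A, \mu_{l}^{\otimes n}) \to \HO^{i}_{\et}(A, \mu_{l}^{\otimes n}) \to \HO^{i}_{\et}(K, \mu_{l}^{\otimes n}) \to \cdots
\end{equation*}

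By exactness, the injectivity in question is equivalent to the surjectivity of the preceding boundary map $\partial$. Since $l$ is invertible in $A$ and $s \hookrightarrow \spec A$ is a regular closed immersion of codimension one, absolute cohomological purity provides a canonical isomorphism $\HO^{i}_{s}(\spec A, \mu_{l}^{\otimes n}) \cong \HO^{i-2}_{\et}(\kappa, \mu_{l}^{\otimes(n-1)})$, and under this identification $\partial$ becomes the classical residue map.

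To produce a section of the residue map, I would first reduce to the case where $A$ is henselian, by comparing the localization sequences for $A$ and its henselization $A^{h}$ (which share the same residue field $\kappa$). In the henselian case the canonical restriction $\HO^{j}_{\et}(A, \mu_{l}^{\otimes m}) \to \HO^{j}_{\et}(\kappa, \mu_{l}^{\otimes m})$ is an isomorphism for every $j$ and every $m$, so any class $\alpha \in \HO^{i-2}_{\et}(\kappa, \mu_{l}^{\otimes(n-1)})$ admits a canonical lift $\widetilde{\alpha} \in \HO^{i-2}_{\et}(A, \mu_{l}^{\otimes(n-1)})$. Fixing a uniformizer $\pi$ of $A$, its class $\{\pi\} \in \HO^{1}_{\et}(K, \mu_{l})$ has residue $1$, and the Leibniz rule for $\partial$ with respect to cup product then yields $\partial(\widetilde{\alpha}|_{K} \cup \{\pi\}) = \pm\alpha$, providing the required section.

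The main obstacle is the invocation of Gabber's absolute cohomological purity theorem, which is a deep input; granted that, the remainder of the argument is a formal consequence of the localization sequence together with the explicit splitting constructed from the uniformizer, and the reduction from a general DVR to the henselian case is achieved by a straightforward comparison of the two localization sequences.
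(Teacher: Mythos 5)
The paper does not actually prove Theorem \ref{DVR}: it is imported verbatim from \cite[Theorem B.2.1 and Examples B.1.1.(2)]{CHK}, so there is no internal proof to compare against, and your proposal amounts to an attempt to reprove the cited result. Your argument is correct in the henselian case (localization, purity in codimension one, the lift $\widetilde{\alpha}$, and the computation $\partial(\widetilde{\alpha}|_{K}\cup\{\pi\})=\pm\alpha$ via the projection formula are all fine there), but the reduction of a general discrete valuation ring to its henselization is a genuine gap --- and it is exactly where the difficulty of the theorem lies.

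Concretely, comparing the two localization sequences gives the commutative square
\begin{equation*}
\begin{CD}
\HO^{i-1}_{\et}(K,\mu_{l}^{\otimes n}) @>\partial>> \HO^{i}_{s}((\spec A)_{\et},\mu_{l}^{\otimes n})\\
@VVV @VV{\simeq}V\\
\HO^{i-1}_{\et}(K^{h},\mu_{l}^{\otimes n}) @>\partial^{h}>> \HO^{i}_{s}((\spec A^{h})_{\et},\mu_{l}^{\otimes n})
\end{CD}
\end{equation*}
in which the right vertical map is the excision isomorphism, so that $\partial=\partial^{h}\circ\operatorname{res}_{K^{h}/K}$. Surjectivity of $\partial^{h}$ therefore gives no information about surjectivity of $\partial$; equivalently, to deduce injectivity of $\HO^{i}_{\et}(A,\mu_{l}^{\otimes n})\to\HO^{i}_{\et}(K,\mu_{l}^{\otimes n})$ from the henselian case one would need injectivity of $\HO^{i}_{\et}(A,\mu_{l}^{\otimes n})\to\HO^{i}_{\et}(A^{h},\mu_{l}^{\otimes n})$, which you have not established and which is essentially as hard as the original claim. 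The same problem resurfaces inside your splitting: the lift $\widetilde{\alpha}$ exists because $\HO^{j}_{\et}(A,-)\to\HO^{j}_{\et}(\kappa,-)$ is an isomorphism for henselian $A$, but for a general discrete valuation ring this restriction map need not be surjective (a class over $A^{h}$ descends only to some \'{e}tale neighbourhood $A'$ of $A$, not to $A$ itself), so your construction only shows that $\operatorname{im}(\partial)$ contains the image of $\HO^{i-2}_{\et}(A,\mu_{l}^{\otimes(n-1)})$ in $\HO^{i-2}_{\et}(\kappa,\mu_{l}^{\otimes(n-1)})$. Bridging this is the actual content of \cite[Theorem B.2.1]{CHK} (it requires an additional input, e.g.\ a transfer argument over the \'{e}tale neighbourhoods of $A$), and the non-henselian case cannot be dispensed with here: the paper applies Theorem \ref{DVR} to the non-henselian rings $R_{\mathfrak{p}}$ and $R/\mathfrak{p}$ in Propositions \ref{HenInjl} and \ref{ex2}.
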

\begin{thm}\label{Pur}(The absolute purity theorem \cite[p.159, Theorem 2.1.1]{G})
Let $Y\overset{i}{\hookrightarrow}X$ be a closed immersion of noetherian
regular schemes of pure codimension $c$. Let $n$ be an integer which is
invertible on $X$, and let $\Lambda=\mathbf{Z}/n$. Then the cycle class (cf.\cite[1.1]{G})
give an isomorphism
\begin{equation*}
\Lambda_{Y}
\xrightarrow{\sim} 
Ri^{!}\Lambda(c)[2c]
\end{equation*}
in $D^{+}(Y_{\et}, \Lambda)$. 
Here $D^{+}(Y_{\et}, \Lambda)$ is the derived category of complexes bounded below
of \'{e}tale sheaves of $\Lambda$-modules on $Y$.
\end{thm}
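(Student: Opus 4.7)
The statement is Gabber's Absolute Purity Theorem, cited from \cite{G}, so my plan is not to give a new proof but to recall the standard strategy in order to justify using it as a black box in what follows. The claim is local on $X$ for the \'{e}tale topology, so the first step would be to reduce to the case where $X=\spec \mathcal{O}$ with $\mathcal{O}$ a strictly henselian regular local ring and $Y=V(t_{1},\ldots,t_{c})$ cut out by part of a regular system of parameters. In this form the theorem becomes the assertion that $\HO^{i}_{Y}(X,\mu_{n}^{\otimes c})$ vanishes for $i\neq 2c$ and is canonically isomorphic to $\mathbf{Z}/n$ for $i=2c$ via the cycle class.

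Next I would proceed by induction on the codimension $c$. The base case $c=1$ is classical cohomological purity for smooth divisors, which can be extracted from the Kummer sequence for $\mathbf{G}_{m}$ together with Hensel's lemma. For the inductive step I would factor the regular closed immersion $Y\hookrightarrow X$ through the divisor $V(t_{1})$, apply the Gysin sequence of the triple $Y\subset V(t_{1})\subset X$, and invoke the $(c-1)$-dimensional case for the closed immersion $Y\hookrightarrow V(t_{1})$ inside the regular ambient scheme $V(t_{1})$.

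The hard part, and where Gabber's and Fujiwara's proofs become genuinely nontrivial, is the mixed characteristic base case when the residue characteristic $p$ divides $n$ and ramification is present. Equi-characteristic reductions via smooth base change are not available in this setting, so one has to use either Gabber's refined alteration theorem to reduce questions mod $p$ to a geometric situation where the case over a field is known, or Fujiwara's rigid-analytic trace-form argument. Since the present paper invokes Theorem \ref{Pur} only as a black box, the substantive work of Section 2 will not be to re-prove purity but to combine Theorem \ref{Pur} with Theorem \ref{DVR}, together with limit and henselization arguments, to derive exactness of the Gersten complex (\ref{KC-I}) on a two-dimensional excellent henselian regular local base and thereby establish Theorem \ref{MT}.
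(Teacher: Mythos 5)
This statement is not proved in the paper at all: it is Gabber's absolute purity theorem, imported as a black box from Fujiwara's article, with the remark that in the only case actually used ($\operatorname{dim}X\leq 2$) it goes back to Gabber (1976) and a published proof appears in Saito's paper. Your proposal correctly identifies this and treats the theorem the same way, so there is no divergence from the paper on the level of logic. Two inaccuracies in your sketch are worth correcting, though. First, you describe the hard case as the one ``when the residue characteristic $p$ divides $n$''; that situation is excluded by hypothesis, since $n$ is required to be invertible on $X$ and hence prime to every residue characteristic. The genuinely difficult case is mixed characteristic $(0,p)$ with $n$ prime to $p$, where smooth/base-change reductions to the equicharacteristic (Artin--Verdier/Popescu) case are unavailable. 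Second, the codimension-one base case is not ``classical'' in this generality: the Kummer sequence and Hensel's lemma only give the computation of $\HO^{i}_{Y}(X,\mu_{n})$ in degrees $i\leq 2$ (equivalently $R^{q}j_{*}\mu_{n}$ for $q\leq 1$); the vanishing in higher degrees for a regular divisor in a regular scheme of mixed characteristic is precisely the substance of Gabber's theorem, proved via Fujiwara's rigid-analytic trace argument or the refined alteration theorem. Neither point affects the paper, which only ever invokes the result as stated.
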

In this section, we use Theorem \ref{Pur} in the case where
$\operatorname{dim}X\leq 2$. In this case, Theorem \ref{Pur} was proved much earlier
by Gabber in 1976. See also \cite[\S 5, Remark 5.6]{SS} for a published proof.
\begin{prop}\label{HenInjl}
Let $R$ be a henselian regular local ring, 
$\mathfrak{m}$ the maximal ideal of $R$
and $K$ the function field of $R$.
Let $l$ be a positive integer such that 
$l\notin \mathfrak{m}$.  
Then
the homomorphism
\begin{equation}\label{inj}
\operatorname{H}^{i}_{\acute et}
\left(
\spec R,
\mu^{\otimes n}_{l}
\right)
\to
\operatorname{H}^{i}_{\acute et}
\left(
\spec K,
\mu^{\otimes n}_{l}
\right)
\end{equation}
is injective for any $i\geq 0$.
\end{prop}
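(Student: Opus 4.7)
The plan is to proceed by induction on $d = \dim R$. The case $d = 0$ is trivial and the case $d = 1$ is exactly Theorem~\ref{DVR}. So suppose $d \geq 2$ and that the proposition is known in every smaller dimension. Pick a regular parameter $\pi \in \mathfrak{m}$ and set $Z = \spec R/\pi R$, a henselian regular local ring of dimension $d-1$ to which the induction hypothesis applies, and $U = \spec R[1/\pi]$. Absolute purity (Theorem~\ref{Pur}) then produces the localization long exact sequence
\begin{equation*}
\cdots \to \HO^{i-2}_{\et}(Z, \mu_{l}^{\otimes (n-1)}) \xrightarrow{g} \HO^{i}_{\et}(\spec R, \mu_{l}^{\otimes n}) \to \HO^{i}_{\et}(U, \mu_{l}^{\otimes n}) \xrightarrow{\partial} \HO^{i-1}_{\et}(Z, \mu_{l}^{\otimes (n-1)}) \to \cdots
\end{equation*}
and I would split the target injection $\HO^{i}_{\et}(\spec R) \hookrightarrow \HO^{i}_{\et}(\spec K)$ into two pieces: the vanishing of the Gysin map $g$, which gives $\HO^{i}_{\et}(\spec R) \hookrightarrow \HO^{i}_{\et}(U)$, and the injection $\HO^{i}_{\et}(U) \hookrightarrow \HO^{i}_{\et}(\spec K)$.

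For the vanishing of $g$ I would compare with the analogous purity long exact sequence for the (non-henselian) discrete valuation ring $R_{(\pi)}$. Theorem~\ref{DVR} applied to $R_{(\pi)}$ gives $\HO^{i}_{\et}(\spec R_{(\pi)}, \mu_{l}^{\otimes n}) \hookrightarrow \HO^{i}_{\et}(\spec K, \mu_{l}^{\otimes n})$, and therefore forces the Gysin map $g'\colon \HO^{i-2}_{\et}(\kappa(\pi), \mu_{l}^{\otimes (n-1)}) \to \HO^{i}_{\et}(\spec R_{(\pi)}, \mu_{l}^{\otimes n})$ to be zero, equivalently the residue $\partial_{\pi}\colon \HO^{i-1}_{\et}(K, \mu_{l}^{\otimes n}) \to \HO^{i-2}_{\et}(\kappa(\pi), \mu_{l}^{\otimes (n-1)})$ to be surjective. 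The natural commutative square
\begin{equation*}
\begin{CD}
\HO^{i-2}_{\et}(Z, \mu_{l}^{\otimes (n-1)}) @>{g}>> \HO^{i}_{\et}(\spec R, \mu_{l}^{\otimes n}) \\
@VVV @VVV \\
\HO^{i-2}_{\et}(\kappa(\pi), \mu_{l}^{\otimes (n-1)}) @>{g'}>> \HO^{i}_{\et}(\spec R_{(\pi)}, \mu_{l}^{\otimes n})
\end{CD}
\end{equation*}
has injective left vertical arrow by the induction hypothesis applied to $Z$, and $g' = 0$ as just noted. Combined with a diagram chase on the full localization sequences in degrees $i-1$ and $i$ (also invoking the induction hypothesis on $Z$ in the lower degree), this should show that every class in $\HO^{i-2}_{\et}(Z, \mu_{l}^{\otimes (n-1)})$ lifts through $\partial$ to some class in $\HO^{i-1}_{\et}(U, \mu_{l}^{\otimes n})$, whence $g = 0$ by exactness.

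The injection $\HO^{i}_{\et}(U, \mu_{l}^{\otimes n}) \hookrightarrow \HO^{i}_{\et}(\spec K, \mu_{l}^{\otimes n})$ is, for $d = 2$ (the case needed for Theorem~\ref{MT}), the exactness of the Gersten/Cousin complex for the one-dimensional regular Noetherian scheme $U$, which in turn follows by applying Theorem~\ref{DVR} to each of the DVRs $(R[1/\pi])_{\mathfrak{q}}$ as $\mathfrak{q}$ runs over the codimension-one points of $U$; in higher dimension one iterates the same purity argument on $U$ itself. The main obstacle I anticipate is precisely the lifting step needed to conclude $g = 0$: the commutative square only establishes that $\operatorname{im}(g)$ vanishes after restriction to $\HO^{i}_{\et}(\spec R_{(\pi)})$, whereas what is needed is vanishing in $\HO^{i}_{\et}(\spec R)$ itself. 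Producing the required lift of a class from $\HO^{i-2}(Z)$ into $\HO^{i-1}(U)$ requires simultaneously controlling residues at $\pi$ and at all other codimension-one points of $U$, which is a genuine weak-approximation type question and where the henselian hypothesis on $R$ must be used in an essential way.
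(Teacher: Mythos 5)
Your strategy shares the paper's starting moves (induction on $\dim R$, a regular parameter $\pi$, Theorem~\ref{DVR} and purity), but it then diverges, and the divergence is where the proof breaks. As you yourself note, nothing in your argument actually proves $g=0$: the commutative square only shows that $\operatorname{im}(g)$ dies after restriction along $\HO^{i}_{\et}(\spec R,\mu_l^{\otimes n})\to \HO^{i}_{\et}(\spec R_{(\pi)},\mu_l^{\otimes n})$, and the henselian hypothesis is never used anywhere in your argument, so the proof cannot be complete (the statement is false for general regular local rings). The missing ingredient is Artin's affine analogue of proper base change (rigidity for henselian pairs): since $R$ is henselian local, the restriction $\HO^{i}_{\et}(\spec R,\mu_l^{\otimes n})\to \HO^{i}_{\et}(\spec R/\mathfrak{p},\mu_l^{\otimes n})$, $\mathfrak{p}=(\pi)$, is an \emph{isomorphism}. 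Feeding this into the commutative square
\begin{equation*}
\begin{CD}
\HO^{i}_{\et}(\spec R,\mu_l^{\otimes n}) @>>> \HO^{i}_{\et}(\spec R_{\mathfrak{p}},\mu_l^{\otimes n})\\
@VV{\simeq}V @VVV\\
\HO^{i}_{\et}(\spec R/\mathfrak{p},\mu_l^{\otimes n}) @>>> \HO^{i}_{\et}(\spec k(R/\mathfrak{p}),\mu_l^{\otimes n})
\end{CD}
\end{equation*}
and using the induction hypothesis on the $(d-1)$-dimensional henselian regular local ring $R/\mathfrak{p}$, whose fraction field is $R_{\mathfrak{p}}/\mathfrak{p}R_{\mathfrak{p}}$, one gets that $\HO^{i}_{\et}(\spec R,\mu_l^{\otimes n})\to\HO^{i}_{\et}(\spec R_{\mathfrak{p}},\mu_l^{\otimes n})$ is injective, and Theorem~\ref{DVR} applied to the discrete valuation ring $R_{\mathfrak{p}}$ finishes the proof. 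This is the paper's argument; note that once you have this injectivity the detour through $U=\spec R[1/\pi]$ is unnecessary (it does, incidentally, make your claim $g=0$ true a posteriori).

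There is a second, unacknowledged gap. The injectivity of $\HO^{i}_{\et}(U,\mu_l^{\otimes n})\to\HO^{i}_{\et}(\spec K,\mu_l^{\otimes n})$ for the one-dimensional scheme $U$ does \emph{not} follow from applying Theorem~\ref{DVR} to each local ring $\mathcal{O}_{U,\mathfrak{q}}$. Pointwise injectivity only controls the piece $\operatorname{E}^{0,i}_{\infty}$ of the coniveau spectral sequence of $U$; the kernel of $\HO^{i}_{\et}(U)\to\HO^{i}_{\et}(K)$ is the coniveau-$\geq 1$ part $\operatorname{E}^{1,i-1}_{\infty}$, i.e.\ the cokernel of the total residue map $\HO^{i}_{\et}(K,\mu_l^{\otimes n})\to\bigoplus_{\mathfrak{q}\in U^{(1)}}\HO^{i-1}_{\et}(\kappa(\mathfrak{q}),\mu_l^{\otimes(n-1)})$, whose vanishing is exactly the same weak-approximation-type assertion you flagged in the first step. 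In the paper this issue is carefully avoided: Proposition~\ref{ex2} only proves surjectivity of $\HO^{i}_{\et}(R_g,\mu_l^{\otimes n})$ onto the unramified classes (via the sequence for Dedekind rings), and the injectivity of $\HO^{n+1}_{\et}(R_g,\mu_l^{\otimes n})\to\HO^{n+1}_{\et}(k(R_g),\mu_l^{\otimes n})$ is established only later, in the Corollary, using Geisser's localization theorem for motivic cohomology and the Beilinson--Lichtenbaum comparison, under extra hypotheses on $R$. So that step cannot be taken as immediate either.
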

\begin{proof}
We prove the statement by induction 
on $\operatorname{dim}(R)$.
Let $R$ be a discrete valuation ring 
(which does not need to be henselian).
Then the homomorphism (\ref{inj}) is injective by 
Theorem \ref{DVR}.

Assume that the statement is true for a henselian
regular
local ring of dimension $d$.

Let $R$ be a henselian regular local ring of dimension $d+1$,
$a\in \mathfrak{m}\setminus\mathfrak{m}^{2}$ and
$\mathfrak{p}=(a)$.
Then $R/\mathfrak{p}$ is a henselian regular local
ring of dimension $d$
and 
\begin{equation*}
k(R/\mathfrak{p})
=
R_{\mathfrak{p}}/\mathfrak{p}R_{\mathfrak{p}}
\end{equation*}
where $k(R/\mathfrak{p})$ is the function field of $R/\mathfrak{p}$.

Therefore the diagram
\begin{equation}\label{rid}
\begin{CD}
\operatorname{H}^{i}_{\acute et}
\left(
\spec R,
\mu^{\otimes n}_{l}
\right)
@>>>
\operatorname{H}^{i}_{\acute et}
\left(
\spec R_{\mathfrak{p}},
\mu^{\otimes n}_{l}
\right)\\
@VVV @VVV\\
\operatorname{H}^{i}_{\acute et}
\left(
\spec R/\mathfrak{p},
\mu^{\otimes n}_{l}
\right)
@>>>
\operatorname{H}^{i}_{\acute et}
\left(
\spec k(R/\mathfrak{p}),
\mu^{\otimes n}_{l}
\right)
\end{CD}
\end{equation}
is commutative. Then the left vertical map in the diagram 
(\ref{rid})
is an isomorphism by \cite[p.93, Theorem (4.9)]{A}
and the bottom horizontal map
in the diagram (\ref{rid})
is injective
by the induction hypothesis. 
Hence the homomorphism
\begin{equation*}
\operatorname{H}^{i}_{\acute et}
\left(
\spec R,
\mu^{\otimes n}_{l}
\right)
\to
\operatorname{H}^{i}_{\acute et}
\left(
\spec R_{\mathfrak{p}},
\mu^{\otimes n}_{l}
\right)
\end{equation*}
is  injective. Moreover the homomorphism
\begin{equation*}
\operatorname{H}^{i}_{\acute et}
\left(
\spec R_{\mathfrak{p}},
\mu^{\otimes n}_{l}
\right)
\to
\operatorname{H}^{i}_{\acute et}
\left(
\spec K,
\mu^{\otimes n}_{l}
\right)
\end{equation*}
is  injective by Theorem \ref{DVR}.
Therefore the statement follows.
\end{proof}
\begin{prop}(cf.\cite[Proposition 4.7]{S})\label{ex2}
Let $R$ be a regular local ring
and $l$ a positive integer which is 
invertible in $R$.
Suppose that $\operatorname{dim}(R)=2$.
Then the sequence
\begin{equation*}
\HO^{i}_{\et}(R, \mu_{l}^{\otimes n})
\to
\HO^{i}_{\et}(k(R), \mu_{l}^{\otimes n})
\xrightarrow{(*)}
\displaystyle
\bigoplus
_{\substack{
\mathfrak{p}\in 
\left(
\spec R
\right)^{(1)}} 
}
\HO^{i-1}_{\et}(\kappa(\mathfrak{p}), \mu_{l}^{\otimes (n-1)})
\end{equation*}
is exact for any $i\geq 0$.
\end{prop}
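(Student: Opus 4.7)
The plan is to run the coniveau spectral sequence for $X=\spec R$ and reduce exactness at $\HO^{i}_{\et}(k(R),\mu_{l}^{\otimes n})$ to the vanishing of a single $E_{2}$-term one row below, which is itself a direct consequence of Theorem~\ref{DVR}.

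First I would verify that the sequence is a complex: at each height-one prime $\mathfrak{p}$, the composition $\HO^{i}_{\et}(R)\to\HO^{i}_{\et}(k(R))\to\HO^{i-1}_{\et}(\kappa(\mathfrak{p}))$ factors through $\HO^{i}_{\et}(R_{\mathfrak{p}})$, and the two arrows $\HO^{i}_{\et}(R_{\mathfrak{p}})\to\HO^{i}_{\et}(k(R))\to\HO^{i-1}_{\et}(\kappa(\mathfrak{p}))$ are consecutive in the purity long exact sequence of the DVR $R_{\mathfrak{p}}$. For exactness, I would form the coniveau spectral sequence
$$
E_{1}^{p,q}=\bigoplus_{x\in X^{(p)}}\HO^{q-p}_{\et}(\kappa(x),\mu_{l}^{\otimes(n-p)})\Longrightarrow\HO^{p+q}_{\et}(X,\mu_{l}^{\otimes n}),
$$
with $E_{1}$-terms identified via Theorem~\ref{Pur}. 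The row $q=i$ realizes the Gersten complex, so the image of $\HO^{i}_{\et}(R)\to\HO^{i}_{\et}(k(R))$ equals $E_{\infty}^{0,i}$ while $\ker(*)=E_{2}^{0,i}$. Because $\dim X=2$ and $X^{(3)}=\emptyset$, the only differential leaving $E_{r}^{0,i}$ after $d_{1}$ is $d_{2}\colon E_{2}^{0,i}\to E_{2}^{2,i-1}$, reducing the problem to showing $E_{2}^{2,i-1}=0$.

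The target $E_{2}^{2,i-1}$ is the cokernel of
$$
d_{1}^{1,i-1}\colon\bigoplus_{\mathfrak{p}\in X^{(1)}}\HO^{i-2}_{\et}(\kappa(\mathfrak{p}),\mu_{l}^{\otimes(n-1)})\longrightarrow\HO^{i-3}_{\et}(\kappa(\mathfrak{m}),\mu_{l}^{\otimes(n-2)}).
$$
Picking $a\in\mathfrak{m}\setminus\mathfrak{m}^{2}$ by Nakayama, $R/(a)$ is a DVR with function field $\kappa((a))$ and residue field $\kappa(\mathfrak{m})$, and the $(a)$-summand of $d_{1}^{1,i-1}$ coincides with the residue map of this DVR. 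By Theorem~\ref{DVR} applied to $R/(a)$, the injection $\HO^{j}_{\et}(R/(a),\mu_{l}^{\otimes(n-1)})\hookrightarrow\HO^{j}_{\et}(\kappa((a)),\mu_{l}^{\otimes(n-1)})$ (for every $j$) forces the boundary $\HO^{i-3}_{\et}(\kappa(\mathfrak{m}),\mu_{l}^{\otimes(n-2)})\to\HO^{i-1}_{\et}(R/(a),\mu_{l}^{\otimes(n-1)})$ in the purity sequence of $R/(a)$ to vanish, so the residue $\HO^{i-2}_{\et}(\kappa((a)),\mu_{l}^{\otimes(n-1)})\twoheadrightarrow\HO^{i-3}_{\et}(\kappa(\mathfrak{m}),\mu_{l}^{\otimes(n-2)})$ is surjective. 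Surjectivity of a single summand already makes $d_{1}^{1,i-1}$ surjective, so $E_{2}^{2,i-1}=0$, $d_{2}^{0,i}=0$, and the desired exactness follows.

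The step I expect to cost the most effort is matching the $(a)$-summand of $d_{1}^{1,i-1}$ in the coniveau spectral sequence with the classical DVR residue of $R/(a)$: for a generic height-one prime $\mathfrak{p}$ the quotient $R/\mathfrak{p}$ is not regular, and so one cannot treat all summands on equal footing; the surjectivity must be routed specifically through the regular divisor $V(a)$ cut out by a regular parameter.
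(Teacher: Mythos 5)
Your proposal is correct, but it organizes the argument differently from the paper, so the two proofs are genuinely distinct in route while resting on the same two external inputs (absolute purity and Theorem \ref{DVR}). The paper first establishes the Dedekind-ring case of the sequence by a limit argument over open subsets, then chooses a regular parameter $g\in\mathfrak{m}\setminus\mathfrak{m}^{2}$ and runs a four-lemma chase on the diagram (\ref{DIA}) built from the localization sequence of $V(g)\subset\spec R$: the middle vertical arrow is surjective by the Dedekind case applied to $R_{g}$, and the right vertical arrow is injective by Theorem \ref{DVR} applied to the discrete valuation ring $R/(g)$. You instead reduce exactness at the middle term to $\operatorname{E}_{2}^{2,i-1}=0$ in the coniveau spectral sequence, i.e.\ to surjectivity of $d_{1}^{1,i-1}$, and obtain that surjectivity from Theorem \ref{DVR} applied to $R/(a)$ --- the same fact the paper uses, read off at the adjacent spot of the same purity long exact sequence (injectivity of restriction for $R/(a)$ is equivalent to surjectivity of the residue one step earlier). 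Your version dispenses with the separate Dedekind-ring step, which the spectral sequence absorbs, at the cost of the compatibility you rightly flag as the delicate point. Note that you do not actually need the full identification of the $(a)$-summand of $d_{1}^{1,i-1}$ with the classical residue of $R/(a)$ (which would require transitivity of the purity isomorphisms for $\{\mathfrak{m}\}\subset V(a)\subset\spec R$): surjectivity alone follows from exactness of the localization triple for $\{\mathfrak{m}\}\subset V(a)\subset\spec R$, purity for the regular immersion $V(a)\hookrightarrow\spec R$ identifying $\HO^{\ast}_{V(a)}((\spec R)_{\et},\mu_{l}^{\otimes n})$ with $\HO^{\ast-2}_{\et}(R/(a),\mu_{l}^{\otimes(n-1)})$, and the injectivity of $\HO^{\ast}_{\et}(R/(a),\mu_{l}^{\otimes(n-1)})\to\HO^{\ast}_{\et}(k(R/(a)),\mu_{l}^{\otimes(n-1)})$ from Theorem \ref{DVR}; this is precisely the content of the upper right portion of the paper's diagram (\ref{DIA}). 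Your remark that the argument must be routed through the regular divisor $V(a)$, since $R/\mathfrak{p}$ need not be regular for a general height-one prime, is exactly the right observation and matches the paper's choice of $g\in\mathfrak{m}\setminus\mathfrak{m}^{2}$.
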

\begin{proof}
Let $A$ be a Dedekind ring, $\mathfrak{q}$
a maximal ideal of $A$.
Then 
\begin{equation*}
\HO^{i+1}_{\mathfrak{q}}((\spec A)_{\et}, \mu_{l}^{\otimes n})
=\HO^{i-1}_{\et}(\kappa(\mathfrak{q}), \mu_{l}^{\otimes (n-1)})
\end{equation*}
by Theorem \ref{Pur}.
Hence the sequence
\begin{equation*}
\HO^{i}_{\et}(A, \mu_{l}^{\otimes n})
\to
\HO^{i}_{\et}(U, \mu_{l}^{\otimes n})
\to
\bigoplus_{\mathfrak{q}\in Z^{(1)}}
\HO^{i-1}_{\et}(\kappa(\mathfrak{q}), \mu_{l}^{\otimes (n-1)})
\end{equation*}
is exact
where $Z$ is a closed subscheme of $\spec A$
and $U=\spec R\setminus Z$.
Since 
\begin{equation*}
\displaystyle\lim_{\substack{\to\\ U}}
\HO^{i}_{\et}(U, \mu_{l}^{\otimes n})
=\HO^{i}_{\et}(k(A), \mu_{l}^{\otimes n})
\end{equation*}
by \cite[pp.88--89, III, Lemma 1.16]{M}, the sequence
\begin{equation}\label{D1}
\HO^{i}_{\et}(A, \mu_{l}^{\otimes n})
\to
\HO^{i}_{\et}(k(A), \mu_{l}^{\otimes n})
\to
\displaystyle\bigoplus_{
\substack{
\mathfrak{q}\in 
\left(
\spec A
\right)^{(1)}}
}
\HO^{i-1}_{\et}(\kappa(\mathfrak{q}), \mu_{l}^{\otimes (n-1)})
\end{equation}
is exact.

Let $\mathfrak{m}$ be the maximal ideal of $R$.
Let
$g\in \mathfrak{m}\setminus \mathfrak{m}^{2}$,
$\mathfrak{p}=(g)$ and
\begin{math}
Z=\spec R/\mathfrak{p}.
\end{math}
Then $R/\mathfrak{p}$ is a regular local ring and 
we have
\begin{equation*}
\HO^{i+1}_{Z}((\spec R)_{\et}, \mu_{l}^{\otimes n})
=
\HO^{i-1}_{\et}(R/\mathfrak{p}, \mu_{l}^{\otimes (n-1)})
\end{equation*}
by Theorem \ref{Pur}. 

We consider the
commutative diagram
\begin{equation}\label{DIA}
\xymatrix@=10pt{
&
\HO^{i}_{\et}(R, \mu_{l}^{\otimes n})
\ar[r]\ar[d] &
\HO^{i}_{\et}(R_{g}, \mu_{l}^{\otimes n})
\ar[r]\ar[d] &
\HO^{i-1}_{\et}(R/\mathfrak{p}, \mu_{l}^{\otimes (n-1)})^{\prime}
\ar[d]\ar[r] & 0 \\
0\ar[r] &
\operatorname{Ker}(*)
\ar[r] &
\HO^{i}_{\et}(R_{g}, \mu_{l}^{\otimes n})^{\prime}
\ar[r] &
\HO^{i-1}_{\et}(k(R/\mathfrak{p}), \mu_{l}^{\otimes (n-1)}) \\
}
\end{equation}
where
\begin{align*}
\HO^{i-1}_{\et}(R/\mathfrak{p}, \mu_{l}^{\otimes (n-1)})^{\prime} 
=
\operatorname{Im}
\left(
\HO^{i}_{\et}(R_{g}, \mu_{l}^{\otimes n})
\to
\HO^{i-1}_{\et}(R/\mathfrak{p}, \mu_{l}^{\otimes (n-1)})
\right)
\end{align*}
and
\begin{align*}
&\HO^{i}_{\et}(R_{g}, \mu_{l}^{\otimes n})^{\prime}\\
=&
\operatorname{Ker}
\left(
\HO^{i}_{\et}(k(R_{g}), \mu_{l}^{\otimes n})
\to
\bigoplus_{\substack{
\mathfrak{q}\in 
\left(
\spec R_{g}
\right)
^{(1)}
}
}
\HO^{i-1}_{\et}(\kappa(\mathfrak{q})), \mu_{l}^{\otimes (n-1)})
\right).
\end{align*}
Then
the rows in the diagram (\ref{DIA}) are exact
by Theorem \ref{Pur}.  
Since $R_{g}$ is a Dedekind domain,
the middle map in the diagram (\ref{DIA}) is surjective
by (\ref{D1}). Moreover, since 
\begin{equation*}
\HO^{i-1}_{\et}(R/\mathfrak{p}, \mu_{l}^{\otimes (n-1)})^{\prime}
\subset 
\HO^{i-1}_{\et}(R/\mathfrak{p}, \mu_{l}^{\otimes (n-1)})
\end{equation*}
and $R/\mathfrak{p}$ is a discrete
valuation ring,
the right map in the diagram (\ref{DIA}) is injective
by Theorem \ref{DVR}. 
Therefore 
the statement follows from the snake lemma.
\end{proof}
\begin{cor}
Let $R$ be
the henselization of
a regular local ring
which is 
essentially of finite type
over a mixed characteristic discrete valuation ring.
Suppose that $\operatorname{dim}(R)=2$.
Then
\begin{equation*}
\operatorname{H}^{n+1}_{\Zar}
\left(
R, \mathbb{Z}/l(n)
\right)
=
0
\end{equation*}
for a positive integer $l$ which is
invertible in $R$.
Here 
$\mathbb{Z}(n)$ is Bloch's cycle complex and
$\mathbb{Z}/l(n)=\mathbb{Z}(n)\otimes\mathbf{Z}/l$ (cf. \cite[p.779]{Ge}).
\end{cor}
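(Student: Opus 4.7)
The plan is to deduce the vanishing by combining Theorem \ref{MT} with the Beilinson-Lichtenbaum conjecture (proved by Voevodsky-Rost, and extended to smooth schemes over a Dedekind ring by Geisser) and the elementary vanishing of the motivic complex above its weight on a field. Concretely, Beilinson-Lichtenbaum provides that for $X$ essentially smooth over a Dedekind ring, the motivic-to-\'{e}tale comparison
\[
\HO^i_{\Zar}(X, \mathbb{Z}/l(n)) \to \HO^i_{\et}(X, \mu_l^{\otimes n})
\]
is an isomorphism for $i\leq n$ and injective for $i=n+1$. Since $R$ is a filtered colimit of essentially smooth local algebras over the base DVR, continuity of both Zariski motivic cohomology and \'{e}tale cohomology propagates the same statement to $\spec R$. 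In particular the map $\HO^{n+1}_{\Zar}(R,\mathbb{Z}/l(n))\to \HO^{n+1}_{\et}(R,\mu_l^{\otimes n})$ is injective.

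By Theorem \ref{MT} the first nontrivial map in the sequence (\ref{KC-I}) is injective, so composing yields an injection
\[
\HO^{n+1}_{\Zar}(R, \mathbb{Z}/l(n)) \hookrightarrow \HO^{n+1}_{\et}(k(R), \mu_l^{\otimes n}).
\]
On the other hand, by naturality of the motivic-to-\'{e}tale comparison map applied to the inclusion $\spec k(R)\hookrightarrow \spec R$, this same composite factors through $\HO^{n+1}_{\Zar}(k(R), \mathbb{Z}/l(n))$. The latter group vanishes because Bloch's cycle complex $\mathbb{Z}/l(n)$ is supported in cohomological degrees $\leq n$ on a field. Hence the displayed arrow is simultaneously injective and zero, forcing $\HO^{n+1}_{\Zar}(R, \mathbb{Z}/l(n))=0$.

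The only substantive non-formal input is the Beilinson-Lichtenbaum theorem in the essentially smooth, mixed-characteristic context; the rest of the argument is a diagram chase using Theorem \ref{MT} and functoriality. The main technical obstacle is therefore verifying that Geisser's Beilinson-Lichtenbaum for smooth schemes over a Dedekind ring propagates to the henselization $R$ via a filtered-colimit argument, using that both Zariski motivic cohomology (through higher Chow groups) and \'{e}tale cohomology commute with such colimits along affine, essentially \'{e}tale transition maps.
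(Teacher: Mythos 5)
Your overall strategy --- squeezing $\HO^{n+1}_{\Zar}(R,\mathbb{Z}/l(n))$ between an injection into $\HO^{n+1}_{\et}(k(R),\mu_l^{\otimes n})$ and a factorization through $\HO^{n+1}_{\Zar}(k(R),\mathbb{Z}/l(n))=0$ --- is sound in outline, and the final diagram chase and the vanishing over the function field are fine. (A small point: for the middle injection you only need Proposition \ref{HenInjl}, not Theorem \ref{MT}; citing the latter is harmless but heavier than necessary.) The genuine problem is your very first input, the injectivity of
\[
\HO^{n+1}_{\Zar}(R,\mathbb{Z}/l(n)) \to \HO^{n+1}_{\et}(R,\mu_l^{\otimes n}).
\]
You justify it by asserting that $R$ is a filtered colimit of essentially smooth local algebras over the base DVR. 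That is not what the hypothesis gives you: $R$ is the henselization of a regular local ring $R_0$ that is merely \emph{essentially of finite type} over the DVR. The henselization is a filtered colimit of essentially \'{e}tale $R_0$-algebras, so it inherits whatever $R_0$ has --- but $R_0$ itself need not be essentially smooth over any discrete valuation ring. A standard example is $R_0=(\mathbb{Z}_p[x,y]/(xy-p))_{(x,y)}$: regular, two-dimensional, essentially of finite type over $\mathbb{Z}_p$, but with a nodal special fibre, hence not a local ring of a smooth scheme over a Dedekind ring. Geisser's Theorem 1.2, the mixed-characteristic Beilinson--Lichtenbaum statement you invoke, is proved only for schemes smooth over a Dedekind ring, so your key injectivity is exactly the statement that is \emph{not} available for $R$. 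Indeed, if $R$ were essentially smooth over the DVR, the corollary would already follow from Geisser's Corollary 4.4 (see Remark \ref{Rem1}(i)) and no argument would be needed.

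The paper's proof is structured precisely to avoid invoking Beilinson--Lichtenbaum on the two-dimensional ring. It picks a regular parameter $g$ and applies Geisser's comparison only to the one-dimensional rings $R_g$ (a Dedekind ring) and $R/(g)$ (a discrete valuation ring) and to fields, where smoothness over the base is not an issue. The two-dimensional ring enters only through (a) the \'{e}tale injectivity of Proposition \ref{HenInjl} combined with absolute purity, which gives surjectivity of $\HO^{n}_{\et}(R_g,\mu_l^{\otimes n})\to\HO^{n-1}_{\et}(R/(g),\mu_l^{\otimes (n-1)})$ and hence, via the degree-$\le n$ comparison isomorphisms, of the corresponding motivic map; and (b) the localization sequence for Bloch's cycle complex, which needs no smoothness and converts that surjectivity into the injectivity of $\HO^{n+1}_{\Zar}(R,\mathbb{Z}/l(n))\to\HO^{n+1}_{\Zar}(R_g,\mathbb{Z}/l(n))$. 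From there the conclusion runs much as yours does, but over $R_g$ rather than over $R$. To repair your argument you would either have to establish Beilinson--Lichtenbaum (equivalently, a Gersten-type resolution of the motivic complex) for regular, non-smooth schemes over a DVR --- which is not in the cited literature --- or reroute through the one-dimensional localizations as the paper does.
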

\begin{proof}
Let 
$\mathfrak{m}$
be the maximal ideal of $R$.
Let $g\in \mathfrak{m}\setminus \mathfrak{m}^{2}$
and $\mathfrak{p}=(g)$. 
Then the homomorphism
\begin{equation*}
\operatorname{H}^{n+1}_{\et}(R, \mu_{l}^{\otimes n})
\to
\operatorname{H}^{n+1}_{\et}(R_{g}, \mu_{l}^{\otimes n})
\end{equation*}
is injective by Proposition \ref{HenInjl}. 
Hence the homomorphism 
\begin{equation*}
\operatorname{H}^{n}_{\et}(R_{g}, \mu_{l}^{\otimes n})
\to
\operatorname{H}^{n-1}_{\et}(R/\mathfrak{p}, \mu_{l}^{\otimes n-1})
\end{equation*}
is surjective by Theorem \ref{Pur}. Therefore
the homomorphism 
\begin{equation*}
\operatorname{H}^{n}_{\Zar}(R_{g}, \mathbb{Z}/l(n))
\to
\operatorname{H}^{n-1}_{\Zar}(R/\mathfrak{p}, 
\mathbb{Z}/l(n-1))
\end{equation*}
is surjective by \cite[p.774, Theorem 1.2]{Ge}
and \cite{V}.
Moreover the homomorphism 
\begin{equation*}
\operatorname{H}^{n+1}_{\Zar}(R, \mathbb{Z}/l(n))
\to
\operatorname{H}^{n+1}_{\Zar}(R_{g}, \mathbb{Z}/l(n))
\end{equation*}
is injective
by the localization theorem \cite[p.779, Theorem 3.2]{Ge}. 
We consider
the commutative diagram
\begin{equation}\label{Rg}
\begin{CD}
\operatorname{H}^{n+1}_{\Zar}(R_{g}, \mathbb{Z}/l(n))
@>>>
\operatorname{H}^{n+1}_{\et}(R_{g}, \mathbb{Z}/l(n)) \\
@VVV @VVV \\
\operatorname{H}^{n+1}_{\Zar}(k(R_{g}), \mathbb{Z}/l(n))
@>>>
\operatorname{H}^{n+1}_{\et}(k(R_{g}), \mathbb{Z}/l(n)).
\end{CD}
\end{equation}
Then the upper map   in the commutative
diagram (\ref{Rg}) is injective
by the Beilinson-Lichenbaum conjecture
(\cite[p.774, Theorem 1.2]{Ge}, \cite{V})
and the right map 
in the commutative
diagram (\ref{Rg})
is injective 
by the commutative diagram (\ref{DIA})
in the proof of Proposition \ref{ex2}.
Hence the homomorphism
\begin{equation*}
\operatorname{H}^{n+1}_{\Zar}(R_{g}, \mathbb{Z}/l(n))
\to
\operatorname{H}^{n+1}_{\Zar}(k(R_{g}), \mathbb{Z}/l(n))
\end{equation*}
is injective and
the homomorphism
\begin{equation*}
\operatorname{H}^{n+1}_{\Zar}(R, \mathbb{Z}/l(n))
\to
\operatorname{H}^{n+1}_{\Zar}(k(R_{g}), \mathbb{Z}/l(n))
\end{equation*}
is also injective. Since
\begin{equation*}
\operatorname{H}^{n+1}_{\Zar}(k(R_{g}), \mathbb{Z}/l(n))
=0,
\end{equation*}
we have
\begin{equation*}
\operatorname{H}^{n+1}_{\Zar}(R, \mathbb{Z}/l(n))
=0.
\end{equation*}
This completes the proof.
\end{proof}
\begin{rem}\label{Rem1}
\begin{itemize}
\item[(i)]
If $R$ is a local ring of 
a smooth algebra
over a discrete valuation ring, then
\begin{equation*}
\operatorname{H}_{\Zar}^{i}
(R, \mathbb{Z}/m(n))=0
\end{equation*}
for $i> n$ and any positive integer $m$
(cf.\cite[p.786, Corollary 4.4]{Ge}).
\item[(ii)]
If we have
\begin{equation*}
\HO^{n+1}_{\Zar}(R, \mathbb{Z}/l(n))=0
\end{equation*}
for any regular local ring $R$
which is finite type over a discrete valuation ring
and a positive integer $l$ which is invertible in $R$,
we can show that the homomorphism
\begin{equation*}
\HO_{\et}^{n+1}(R, \mu_{l}^{\otimes n}) 
\to
\HO_{\et}^{n+1}(k(R), \mu_{l}^{\otimes n})
\end{equation*}
is injective by a similar augument as in the proof
of \cite[Theorem 4.2]{S2}.
\item[(iii)]
The reason why we assume 
$\operatorname{dim}(R)=2$ in Propositin \ref{ex2} and Theorem \ref{HenGerl}
is that we have to show that the middle map in the diagram (\ref{DIA}), i.e.,
the homomorphism
%
\begin{equation*}
\HO^{n+1}_{\et}(R_{g}, \mu_{l}^{\otimes n})   
\to
\HO^{n+1}_{\et}(R_{g}, \mu_{l}^{\otimes n})^{\prime} 
\end{equation*}
%
is surjective for 
an element $g$ of $\mathfrak{m}\setminus \mathfrak{m}^{2}$.
Here $\mathfrak{m}$ is the maximal ideal of $R$ and
\footnotesize
\begin{equation*}
\HO^{n+1}_{\et}(R_{g}, \mu_{l}^{\otimes n})^{\prime} 
=
\operatorname{Ker}\left(
\HO^{n+1}_{\et}(k(R), \mu_{l}^{\otimes n})
\to
\bigoplus_{\mathfrak{q}\in (\spec R_{g})^{(1)}}
\HO^{n}_{\et}(\kappa(\mathfrak{q}), \mu_{l}^{\otimes (n-1)})
\right).
\end{equation*}
\normalsize
If we have
\begin{equation*}
\HO^{n+1}_{\Zar}(R, \mathbb{Z}/l(n))
=
\HO^{n+2}_{\Zar}(R, \mathbb{Z}/l(n))=0
\end{equation*}
for any regular local ring $R$ which is 
finite type over a discrete valuation ring and 
a positive integer $l$ which is invertible in $R$, then  
\begin{equation*}
\HO^{n+1}_{\Zar}(R_{g}, \mathbb{Z}/l(n))
=
\HO^{n+2}_{\Zar}(R_{g}, \mathbb{Z}/l(n))=0
\end{equation*}
by the localization theorem (\cite[p.779, Theorem 3.2]{Ge}) and we can show that
%
\begin{align*}
\HO^{n+1}_{\et}(R_{g}, \mu_{l}^{\otimes n}) 
=\Gamma(\spec R_{g}, R^{n+1}\epsilon_{*}(\mu_{l}^{\otimes n})) 
=\HO^{n+1}_{\et}(R_{g}, \mu_{l}^{\otimes n})^{\prime} 
\end{align*}
%
and Proposition \ref{ex2} holds. Here 
$\epsilon: (\spec R_{g})_{\et}\to (\spec R_{g})_{\Zar}$ 
is the change of site maps.
\end{itemize}
\end{rem}
\begin{thm}\label{HenGerl}
Let $R$ be
a henselian regular local ring
with $\operatorname{dim}(R)=2$
and $l$
a positive integer which is invertible in $R$.
Then
the sequence
\begin{align}\label{2Gers}
0
\to
\HO^{i}_{\et}
\left(
R,
\mu_{l}^{\otimes n}
\right)  
\to
\HO^{i}_{\et}
\left(
k(R),
\mu_{l}^{\otimes n}
\right)  
\to
\displaystyle
\bigoplus_{
\substack
{
\mathfrak{p}\in \left(\spec R\right)^{(1)}  
}
}
\HO^{i-1}_{\et}
\left(
\kappa(\mathfrak{p}),
\mu_{l}^{\otimes (n-1)}
\right) 
\nonumber
\\
\to
\displaystyle
\bigoplus_{
\substack
{
\mathfrak{p}\in 
\left(\spec R \right)
^{(2)} 
}
}
\HO^{i-2}_{\et}
\left(
\kappa(\mathfrak{p}),
\mu_{l}^{\otimes (n-2)}
\right) \to 0
\end{align}
is exact for any $i\geq 0$.
\end{thm}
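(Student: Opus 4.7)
The injectivity of $\HO^i_\et(R,\mu_l^{\otimes n})\to \HO^i_\et(k(R),\mu_l^{\otimes n})$ is Proposition \ref{HenInjl}, and exactness at $\HO^i_\et(k(R),\mu_l^{\otimes n})$ is Proposition \ref{ex2}. Since $(\spec R)^{(2)}=\{\mathfrak{m}\}$, the remaining content is surjectivity onto $\HO^{i-2}_\et(\kappa(\mathfrak{m}),\mu_l^{\otimes(n-2)})$ and exactness at the height-one term. Surjectivity is immediate: picking a regular parameter $g\in\mathfrak{m}\setminus\mathfrak{m}^2$, the quotient $A:=R/(g)$ is a henselian DVR with residue field $\kappa(\mathfrak{m})$ and fraction field $\kappa((g))$, and its DVR residue map $\HO^{i-1}_\et(\kappa((g)),\mu_l^{\otimes(n-1)})\to\HO^{i-2}_\et(\kappa(\mathfrak{m}),\mu_l^{\otimes(n-2)})$, which is a direct summand of the relevant map in (\ref{2Gers}), is surjective by Theorem \ref{DVR} and Theorem \ref{Pur}.

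The plan for exactness at the height-one term hinges on the short exact sequence
\begin{equation*}
0\to \HO^i_\et(R,\mu_l^{\otimes n})\to \HO^i_\et(R_g,\mu_l^{\otimes n})\xrightarrow{\partial}\HO^{i-1}_\et(A,\mu_l^{\otimes(n-1)})\to 0 \qquad (\ast)
\end{equation*}
coming from the Gysin localization long exact sequence for $\spec A\hookrightarrow\spec R$ (using Theorem \ref{Pur} to identify the local cohomology terms) together with the vanishing of every Gysin map $\HO^j_\et(A,\mu_l^{\otimes(n-1)})\to\HO^{j+2}_\et(R,\mu_l^{\otimes n})$. For the vanishing, Artin's theorem identifies $\HO^*_\et(R,\mu_l^{\otimes\bullet})=\HO^*_\et(A,\mu_l^{\otimes\bullet})=\HO^*_\et(\kappa(\mathfrak{m}),\mu_l^{\otimes\bullet})$ (both rings being henselian with residue field $\kappa(\mathfrak{m})$), under which the restriction $i^*$ becomes the identity and is in particular surjective. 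By the projection formula, the Gysin map is then cup product with the cycle class $[V(g)]\in \HO^2_\et(R,\mu_l)$; and this class vanishes because $V(g)$ is a principal Cartier divisor, so $\mathcal{O}_R(V(g))$ is trivial in $\operatorname{Pic}(R)=0$, whence its Kummer-sequence image in $\HO^2_\et(R,\mu_l)$ is zero.

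Granted $(\ast)$, combine it with Gersten's exact sequence for the one-dimensional regular ring $R_g$,
\begin{equation*}
0\to \HO^i_\et(R_g,\mu_l^{\otimes n})\to \HO^i_\et(k(R),\mu_l^{\otimes n})\to \bigoplus_{\mathfrak{p}\neq(g)}\HO^{i-1}_\et(\kappa(\mathfrak{p}),\mu_l^{\otimes(n-1)})\to 0
\end{equation*}
(the standard Gersten statement for Dedekind rings in the style of \cite{B-O}; the left injectivity can also be read off from $(\ast)$ and Proposition \ref{HenInjl}), and with the DVR Gersten sequence for $A$. Given any $\alpha=(\alpha_\mathfrak{p})_\mathfrak{p}$ in the kernel of the map into $\HO^{i-2}_\et(\kappa(\mathfrak{m}),\mu_l^{\otimes(n-2)})$, I first lift the sub-tuple $(\alpha_\mathfrak{p})_{\mathfrak{p}\neq(g)}$ to some $\tilde{\alpha}'\in \HO^i_\et(k(R),\mu_l^{\otimes n})$ via Gersten surjectivity for $R_g$; set $\beta_g:=\partial_{(g)}(\tilde{\alpha}')-\alpha_{(g)}\in\HO^{i-1}_\et(\kappa((g)),\mu_l^{\otimes(n-1)})$; a short computation combining $d^2=0$ of the Gersten complex (applied to $\tilde{\alpha}'$) with the hypothesis on $\alpha$ shows $\partial_\mathfrak{m}(\beta_g)=0$, so by Gersten for the DVR $A$, $\beta_g$ lifts to some $\bar{\eta}\in\HO^{i-1}_\et(A,\mu_l^{\otimes(n-1)})$; by the surjectivity of $\partial$ in $(\ast)$, $\bar{\eta}$ lifts further to $\gamma\in\HO^i_\et(R_g,\mu_l^{\otimes n})$. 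Then $\tilde{\alpha}:=\tilde{\alpha}'-\gamma$ has exactly the prescribed residues, completing the argument.

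The main obstacle is the Gysin-vanishing step: this is where the henselian hypothesis (via Artin, ensuring $i^*$ is surjective so that the projection formula forces the Gysin map to be cup product with the cycle class) and the regular local hypothesis ($\operatorname{Pic}(R)=0$, making principal divisors cohomologically trivial) must work in concert. The restriction $\operatorname{dim}(R)=2$ enters in that $V(g)$ is cut out in a single step by a regular parameter, with trivial normal bundle; in higher dimension the argument would require an iterated Gysin-vanishing, cf.\ Remark \ref{Rem1} (iii).
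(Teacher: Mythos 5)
Your overall architecture (injectivity and exactness at the second term from Propositions \ref{HenInjl} and \ref{ex2}, then a direct diagram chase through the localization sequences for $V(g)\subset\spec R$ and for the henselian DVR $A=R/(g)$) is genuinely different from the paper, which instead runs the coniveau spectral sequence and extracts $\operatorname{E}_2^{1,i-1}=\operatorname{E}_2^{2,i-2}=0$ formally from those same two propositions. Your sequence $(\ast)$ is fine: the rigidity isomorphisms $\HO^{*}_{\et}(R)\cong\HO^{*}_{\et}(\kappa(\mathfrak{m}))\cong\HO^{*}_{\et}(A)$ do make $i^{*}$ surjective, the projection formula reduces the Gysin map to cup product with the cycle class of the principal divisor $V(g)$, and that class dies because $\operatorname{Pic}(R)=0$. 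The surjectivity onto $\HO^{i-2}_{\et}(\kappa(\mathfrak{m}),\mu_l^{\otimes(n-2)})$ via the single summand at $(g)$ is also correct.

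The genuine gap is the step you label ``the standard Gersten statement for Dedekind rings in the style of \cite{B-O}'', namely the surjectivity of
$\HO^{i}_{\et}(k(R),\mu_l^{\otimes n})\to\bigoplus_{\mathfrak{q}\in(\spec R_g)^{(1)}}\HO^{i-1}_{\et}(\kappa(\mathfrak{q}),\mu_l^{\otimes(n-1)})$,
on which your entire lifting argument rests. This is \emph{not} a standard fact about Dedekind rings and is false in general: for $B$ the ring of $S$-integers of a number field the residue map on $\HO^{2}_{\et}(k(B),\mu_l)=\operatorname{Br}(k(B))[l]$ is constrained by reciprocity and does not surject onto the full direct sum. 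What Bloch--Ogus style arguments give for an arbitrary Dedekind ring is only exactness in the middle (the paper's sequence (\ref{D1})); right-exactness is equivalent to the vanishing of all Gysin maps $\HO^{i-1}_{\et}(\kappa(\mathfrak{q}))\to\HO^{i+1}_{\et}(R_g)$, i.e.\ to the injectivity of $\HO^{i+1}_{\et}(R_g,\mu_l^{\otimes n})\to\HO^{i+1}_{\et}(k(R),\mu_l^{\otimes n})$, and your parenthetical derivation of that injectivity from ``$(\ast)$ and Proposition \ref{HenInjl}'' does not work as stated (injectivity of the composite $\HO^{i+1}_{\et}(R)\to\HO^{i+1}_{\et}(R_g)\to\HO^{i+1}_{\et}(k(R))$ says nothing about the second arrow). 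The gap is repairable with tools you already have: if $x\in\HO^{i+1}_{\et}(R_g)$ dies in $\HO^{i+1}_{\et}(k(R))$, then by compatibility of the boundary maps its image in $\HO^{i}_{\et}(A)$ dies in $\HO^{i}_{\et}(\kappa((g)))$, hence vanishes by Theorem \ref{DVR} applied to the DVR $A$; so $x$ comes from $\HO^{i+1}_{\et}(R)$ by $(\ast)$ and vanishes by Proposition \ref{HenInjl}. You should make this argument (and the residue compatibilities it uses) explicit, or else adopt the paper's spectral-sequence route, which obtains the exactness at the last two terms purely formally and never needs this surjectivity.
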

\begin{proof}

The exactness 
of the complex (\ref{2Gers})
at the first two terms
follows from Proposition \ref{HenInjl} and
Proposition \ref{ex2}.

We consider the coniveau spectral sequence
\begin{equation*}
\operatorname{E}^{p, q}_{1}
=
\coprod_{x\in (\spec R)^{(p)}}
\HO^{p+q}_{x}
\left(
\spec R, \mu^{\otimes n}_{l}
\right)
\Rightarrow   
\HO^{p+q}_{\et}
\left(
R, \mu^{\otimes n}_{l}
\right)
=\operatorname{E}^{p+q}
\end{equation*}
(cf.\cite[\S 1]{CHK}). Then we have a filtration
\begin{align*}
0\subset 
\operatorname{F}^{p+q}_{p+q}
\subset
\cdots
\subset
\operatorname{F}^{p+q}_{1}
\subset
\operatorname{F}^{p+q}_{0}
=
\operatorname{E}^{p+q},
\end{align*}
such that
\begin{equation*}
\operatorname{F}^{p+q}_{p}/
\operatorname{F}^{p+q}_{p+1}
\simeq
\operatorname{E}^{p, q}_{\infty}.
\end{equation*}
By Theorem \ref{Pur}, 
it suffices to show that
\begin{equation*}
\operatorname{E}_{2}^{1, i-1}
=
\operatorname{E}_{2}^{2, i-2}
=
0.
\end{equation*}
By Proposition \ref{HenInjl},
the morphism 
\begin{equation*}
\operatorname{E}^{i}
\to
\operatorname{E}^{0, i}_{\infty}
\end{equation*}
is injective and 
\begin{equation*}
\operatorname{F}^{i}_{1}
=
\operatorname{F}^{i}_{2}
=
0.
\end{equation*}
Hence we have
\begin{equation*}
\operatorname{E}_{\infty}^{1, i-1}
=
\operatorname{E}_{\infty}^{2, i-2}
=
0.
\end{equation*}
Since
\begin{equation*}
\operatorname{E}^{p, i-p+1}_{r}=0
\end{equation*}
for $p\geq 3$ and
\begin{equation*}
\operatorname{E}^{1-r, i+r-2}_{r}=0    
\end{equation*}
for $r\geq 2$, we have
\begin{equation*}
\operatorname{E}_{2}^{1, i-1}
=
\operatorname{E}_{\infty}^{1, i-1}
=0.
\end{equation*}
By the exactness of the complex (\ref{2Gers}) at the second term, we have
\begin{equation*}
\operatorname{E}^{0, i-1}_{2}
=
\operatorname{E}^{0, i-1}_{\infty}
=
\operatorname{E}^{i-1}
\end{equation*}
and
\begin{equation*}
\operatorname{Im}
\left(
\operatorname{E}^{0, i-1}_{2}
\xrightarrow{\operatorname{d}_{2}^{0, i-1}}
\operatorname{E}^{2, i-2}_{2}
\right)
=0.   
\end{equation*}
Hence we have
\begin{equation*}
\operatorname{E}^{2, i-2}_{2}
=
\operatorname{E}^{2, i-2}_{3}. 
\end{equation*}
Moreover, since
\begin{equation*}
\operatorname{E}^{2-r, i+r-3}_{r}=0    
\end{equation*}
for $r\geq 3$,
we have
\begin{equation*}
\operatorname{E}^{2, i-2}_{r+1} 
=
\frac{\operatorname{Ker}(\operatorname{d}_{r}^{2, i-2})}
{\operatorname{Im}(\operatorname{d}_{r}^{2-r, i+r-3})}
=
\operatorname{E}^{2, i-2}_{r}
\end{equation*}
for $r\geq 3$.
Therefore
\begin{equation*}
\operatorname{E}^{2, i-2}_{2}
=
\operatorname{E}^{2, i-2}_{3}
=
\operatorname{E}^{2, i-2}_{\infty}
=0.
\end{equation*}
This completes the proof.
\end{proof}
%


\end{document}